\newtheorem{thm}{Theorem}[section]
\newtheorem{prop}{Proposition}[section]
\newtheorem{conj}{Conjecture}[section]
\theoremstyle{definition}
\theoremstyle{remark}
\newtheorem{claim}{Claim}
\numberwithin{equation}{section}
\newcommand{\F}{\mathcal{F}}
\begin{document}
\setlength{\parskip}{0in}
\title[]{Large matchings in bipartite graphs have a rainbow matching}%
\author{Daniel Kotlar}%
\address{Computer Science Department, Tel-Hai College, Upper Galilee 12210, Israel}%
\email{dannykot@telhai.ac.il}%
\author{Ran Ziv}%
\address{Computer Science Department, Tel-Hai College, Upper Galilee 12210, Israel}%
\email{ranziv@telhai.ac.il}%
\setlength{\parskip}{0.075in}
\begin{abstract}
Let $g(n)$ be the least number such that every collection of $n$ matchings, each of size at least $g(n)$, in a bipartite graph, has a full rainbow matching. Aharoni and Berger \cite{AhBer} conjectured that $g(n)=n+1$ for every $n>1$. This generalizes famous conjectures of Ryser, Brualdi and Stein. Recently, Aharoni, Charbit and Howard \cite{ACH} proved that $g(n)\le\lfloor\frac{7}{4}n\rfloor$.
We prove that $g(n)\le\lfloor\frac{5}{3} n\rfloor$.
\end{abstract}
\maketitle
\section{Introduction}
Given sets $F_1,F_2,\ldots,F_k$ of edges in a graph a \emph{(partial) rainbow matching} is a choice of disjoint edges from some of the $F_i$'s. In other words, it is a partial choice function whose range is a matching. If the rainbow matching has disjoint edges from all the $F_i$'s then we call it a \emph{full rainbow matching}.
During the last decade the problem of finding conditions for large rainbow matchings in a graph was extensively explored. See, for example, \cite{AhBer, gyarfas2012, kostochka2012, LeSau10}. Many results and conjectures on the subject were influenced by the well-known conjectures of Ryser \cite{Ryser67}, asserting that every Latin square of odd order $n$ has a transversal of order $n$, and Brualdi \cite{Denes74} (see also \cite{BruRys} p. 255), asserting that every Latin square of even order $n$ has a partial transversal of size $n-1$. Brualdi's conjecture may also be casted into the form of a rainbow matching problem:

\begin{conj}\label{conj1}
A partition of the edges of the complete bipartite graph $K_{n,n}$ into $n$ matchings, each of size $n$, has a rainbow matching of size $n-1$.
\end{conj}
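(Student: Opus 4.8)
The plan is to recast the conjecture in the rainbow-matching language used throughout the paper and then attack it by the maximum-rainbow-matching together with an augmentation method. A partition of $K_{n,n}$ into $n$ matchings of size $n$ is exactly a proper $n$-edge-colouring, i.e.\ an $n\times n$ Latin square whose colour classes $M_1,\dots,M_n$ are perfect matchings, and a rainbow matching of size $n-1$ is precisely a partial transversal of size $n-1$; so Conjecture \ref{conj1} is the Brualdi--Ryser--Stein conjecture. The first observation is that it suffices to \emph{discard one colour}: if we can find a full rainbow matching of the $n-1$ matchings $M_1,\dots,M_{n-1}$ (each of size $n$), we are done. This places the problem squarely inside the Aharoni--Berger framework, since we then have $n-1$ matchings each of size $n=(n-1)+1$, and a bound of the shape $g(n-1)\le n$ would settle it immediately. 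Thus one route is to sharpen the estimates on $g$ proved here all the way from $\lfloor\frac53 n\rfloor$ down to $n+O(1)$.

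The direct combinatorial route is to take a \emph{maximum} rainbow matching $R$ and suppose for contradiction that $|R|=k\le n-2$. Then there are at least two free rows, two free columns, and two unused colours. Maximality gives the basic structural fact: no edge of an unused colour can sit on both a free row and a free column, for it would extend $R$. Since each unused colour class is a perfect matching with exactly $n-k$ edges on free rows, all of which must then lie in distinct used columns, counting yields $n-k\le k$, i.e.\ the elementary bound $k\ge n/2$. To go further I would introduce \emph{alternating/swap} structures: replace a used edge of $R$ by an edge of an unused colour, which frees a row and a column and may open a new extension, then iterate these exchanges along alternating paths. A careful analysis of how far such augmentations can be pushed is exactly what yields the classical lower bounds $\tfrac{2n+1}{3}$, then $n-\sqrt n$, and, with a much deeper iterative argument, $n-O(\log^2 n)$.

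The main obstacle is precisely this last step, and it is the reason the conjecture is still open: the known alternating-path and swapping arguments lose a lower-order term and stall at $n-O(\log^2 n)$, so they do not reach $n-1$. Closing the gap appears to require a genuinely global device rather than a local exchange --- for example a topological or connectivity-based argument controlling all unused colours simultaneously, or a sufficiently strong general bound of the form $g(n)\le n+o(n)$ (ideally $g(n-1)\le n$) which, via the reduction above, would yield Conjecture \ref{conj1}. I therefore expect that no self-contained elementary augmentation will suffice, and that the crux is supplying the extra global control needed to convert ``$n-O(\log^2 n)$'' into the exact value $n-1$.
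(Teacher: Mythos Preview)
The statement in question is a \emph{conjecture} in the paper, not a theorem; the paper offers no proof of it and does not claim to. It is precisely the matching reformulation of Brualdi's conjecture on partial transversals of Latin squares, and it remains open.

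You correctly recognise this. Your proposal is not a proof but an accurate survey of the known approaches and their limitations: the reduction to the Aharoni--Berger function $g$ (so that $g(n-1)\le n$ would suffice), the elementary bound $|R|\ge n/2$ obtained by counting edges of an unused colour on free rows, and the alternating-path improvements culminating in the Hatami--Shor $n-O(\log^2 n)$ bound quoted in the paper. Your assessment that closing the remaining gap requires something beyond local exchange arguments matches the present state of the problem. There is therefore nothing to compare against a paper proof, and no gap to name in your write-up beyond the one you yourself identify: it is not, and does not pretend to be, a proof.
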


A far reaching generalization of Conjecture~\ref{conj1} was posed by Stein \cite{Stein75}:
\begin{conj}\label{conj2}
A partition of the edges of the complete bipartite graph $K_{n,n}$ into n subsets, each of size $n$, has a rainbow matching of size $n-1$.
\end{conj}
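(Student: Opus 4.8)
The plan is to attack Conjecture~\ref{conj2} through a maximum partial rainbow matching together with an augmentation argument, the crucial subtlety being that — unlike the setting behind $g(n)$ — the parts $F_1,\dots,F_n$ of size $n$ need not be matchings and may have tiny matching number (a full star at a vertex has matching number $1$). So first I would set a threshold $t$, call $F_i$ \emph{spread} if its matching number $\nu(F_i)\ge t$ and \emph{concentrated} otherwise, and record the global constraint forced by the partition: since the colors partition all $n^2$ edges while each of the $2n$ vertices lies in exactly $n$ edges, the concentrated colors — whose edges cluster on few vertices — cannot be too numerous. A double-counting of edges against the degree sequence should bound their number, and this bound is what makes the two classes tractable separately.

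Next I would take a maximum partial rainbow matching $M$, say of size $m$, and assume for contradiction that $m\le n-2$, so at least two colors are unused. Each unused color $F_j$ must then have \emph{every} one of its $n$ edges meeting $V(M)$, for otherwise an edge of $F_j$ with both endpoints free would extend $M$. Restricting attention first to the spread colors, I would invoke the matching machinery behind the bound $g(n)\le\lfloor\frac53 n\rfloor$ stated in the abstract, applied to sub-collections of spread colors whose matchings exceed the relevant threshold, to produce a rainbow matching that already misses only a bounded number of colors.

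The concentrated colors should then be treated as an asset rather than a liability: a color concentrated as a near-star at a vertex $v$ can contribute \emph{any} of its edges at $v$, so it behaves like a wildcard matchable to any currently free neighbor of $v$. I would therefore sequence the construction so that spread colors are placed first by the $g$-type argument and the star-like colors are inserted last, each along a short alternating path whose reversal frees a vertex for the next wildcard. The aim of this phase is to climb from size $m$ up to exactly $n-1$, using the concentrated colors to absorb the final deficiencies.

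The hard part — and the reason Conjecture~\ref{conj2}, and already its special cases Conjecture~\ref{conj1} and the Ryser--Brualdi conjecture, remain open — is precisely the \emph{last} increment: closing the gap from a rainbow matching of size $n-O(\sqrt n)$, which Woolbright--Brouwer type counting comfortably reaches, up to the exact value $n-1$. Near the top the alternating trees collapse, the slack in the double counting vanishes, and the concentrated colors can be arranged adversarially so that no single augmentation succeeds. Controlling this boundary regime, rather than the asymptotic one, is where the whole difficulty concentrates, and it is where a genuinely new idea would be required to complete the argument.
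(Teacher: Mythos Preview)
The statement you are attempting to prove is Conjecture~\ref{conj2} (Stein's conjecture), which the paper states as an \emph{open} conjecture and does not prove. There is therefore no ``paper's own proof'' to compare against; the paper merely records the conjecture as motivation and then proves the unrelated bound $g(n)\le\lfloor 5n/3\rfloor$ for families of \emph{matchings}. Your own text acknowledges this: the final paragraph explicitly says the last increment is where ``a genuinely new idea would be required to complete the argument.'' So what you have written is a strategy outline that openly fails to conclude, not a proof.

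Beyond this, the central technical step of your outline does not work even as stated. You propose to apply the $g(n)\le\lfloor 5n/3\rfloor$ result to the ``spread'' color classes. But that result requires $n$ \emph{matchings} each of size at least $\lfloor 5n/3\rfloor$, whereas here each color class has only $n$ edges in total, so any matching extracted from a color class has size at most $n<\lfloor 5n/3\rfloor$. No threshold $t\le n$ on $\nu(F_i)$ can manufacture matchings large enough to feed into Theorem~\ref{thm1}. The double-counting you sketch for bounding the number of concentrated colors is also too vague to yield a usable bound: a color class that is a perfect matching and a color class that is a star both contribute the same $n$ edge-incidences to the degree count, so edge/degree double counting alone does not separate them. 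Finally, treating star-like colors as ``wildcards'' to be inserted last is precisely the move that fails in the known hard instances (and in the analysis of Latin squares), since the centers of those stars may all coincide or lie inside $V(M)$, leaving no free neighbor to absorb them. In short, the proposal neither proves the conjecture nor identifies a route that avoids the known obstructions.
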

Conjecture~\ref{conj2} is equivalent to the assertion that any $n\times n$ array of numbers, where each of the numbers $1,\ldots,n$ appears exactly $n$ times, has a transversal of size $n-1$. The special case where each number appears once in each row translates into a conjecture on matchings, namely, that a family of $n$ matchings, each of size $n$, in a bipartite graph with $2n$ vertices, has a rainbow matching of size $n-1$. We state this conjecture in a more general form, as a generalization of Conjecture~\ref{conj1}:

\begin{conj}\label{conj3}
A family of $n$ matchings, each of size $n$, in a bipartite graph, has a rainbow matching of size $n-1$.
\end{conj}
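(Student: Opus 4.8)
The plan is to attack Conjecture~\ref{conj3} by the extremal method that underlies the known bounds on $g(n)$: assume a counterexample and study a \emph{maximum} rainbow matching. Suppose $F_1,\dots,F_n$ are matchings, each of size $n$, in a bipartite graph, and that the largest rainbow matching $R$ has size $r\le n-2$. Relabel so that $R=\{e_1,\dots,e_r\}$ with $e_i\in F_i$. Because $r\le n-2$, at least two of the matchings contribute no edge to $R$. The first step is the easy counting bound: every edge of an unused matching $F_j$ must meet $R$ (otherwise we could append it to $R$, contradicting maximality), and since $F_j$ is itself a matching, each of the $2r$ vertices covered by $R$ lies on at most one edge of $F_j$. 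Hence $n=|F_j|\le 2r$, so $r\ge n/2$. This reproduces the trivial half-bound and fixes the object we must improve.

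The substantive step is to push $r$ from roughly $n/2$ up to $n-1$ by an alternating-path analysis, exactly the mechanism that produces the bounds $g(n)\le\lfloor 7n/4\rfloor$ and $g(n)\le\lfloor 5n/3\rfloor$. For each unused $F_j$ and each edge $f\in F_j$, $f$ meets some $e_i$; deleting $e_i$ and inserting $f$ frees the color $i$, and one chases the resulting alternating chain of edges (belonging alternately to $R$ and to the unused matchings) looking for a chain that terminates at a vertex missed by $R$, which would yield an augmentation and contradict maximality. I would organize this with a weight or injection argument: assign to each of the $\ge 2n$ edges of the unused matchings a charge on the $r$ edges of $R$, show that each $e_i$ can absorb only a bounded amount of charge unless an augmenting chain appears, and conclude that $r$ must be nearly $n$. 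Controlling how charges from distinct unused matchings interact on a common $e_i$, and ruling out the closed (non-augmenting) alternating cycles, is where the availability of at least two unused matchings in Conjecture~\ref{conj3}, rather than the single large matching exploited in the $g(n)$ bounds, should be brought to bear.

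The hard part---indeed the reason this is stated as a conjecture rather than proved here---is that the alternating-path bookkeeping does not close the gap all the way to $n-1$. The blocking configurations, in which every alternating chain loops back without reaching an exposed vertex, are precisely the Latin-square transversal obstructions behind the Ryser--Brualdi--Stein conjectures, and no known argument eliminates them in general; the best unconditional results fall short of size $n-1$, giving only a rainbow matching of size $n-o(n)$. It is worth noting that the paper's own theorem, $g(n)\le\lfloor 5n/3\rfloor$, does not settle Conjecture~\ref{conj3}: it guarantees a \emph{full} rainbow matching only when the matchings have size at least $\lfloor 5n/3\rfloor$, whereas here the matchings have size exactly $n$, and applying it to any $n-1$ of them would require $g(n-1)\le n$, i.e.\ essentially the Aharoni--Berger value $g(n-1)=n$, which is itself still open. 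Thus a complete proof would have to inject a genuinely new idea---topological connectivity bounds for the relevant independence complex, or a global rather than local treatment of the alternating structure---beyond the local augmentation that the current method supplies.
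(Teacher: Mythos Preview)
The statement you were given is labeled Conjecture~\ref{conj3} in the paper, and the paper does \emph{not} prove it; it is presented as an open problem generalizing the Ryser--Brualdi--Stein conjectures, and the paper's results (Proposition~\ref{prop1} and Theorem~\ref{thm1}) concern matchings of size $\lfloor 3n/2\rfloor$ and $\lfloor 5n/3\rfloor$, not $n$. So there is no ``paper's own proof'' to compare against.

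You have correctly recognized this. Your write-up is not a proof and does not pretend to be: the first two paragraphs sketch the standard extremal/alternating-path framework, and the third paragraph explicitly and accurately explains why that framework stalls short of $n-1$ and why Theorem~\ref{thm1} does not imply the conjecture. Your remark that deducing Conjecture~\ref{conj3} from the $g(n)$ bound would require $g(n-1)\le n$, essentially the full Aharoni--Berger conjecture, is exactly right. The easy count $n=|F_j|\le 2r$ giving $r\ge n/2$ is also correct.

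In short: there is no gap to name because you have not claimed a proof, and the paper has not supplied one either. Your proposal is an honest and well-informed discussion of why the conjecture is hard, not a resolution of it.
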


A slight generalization of results of Woolbright \cite{Wool78} and Brower, de Vries and Wieringa \cite{brouwer78} yields the following theorem (for details see \cite{AKZ}):

\begin{thm}\label{thm1:1}
A family of $n$ matchings, each of size $n$, in a bipartite graph has a rainbow matching of size $n-\sqrt{n}$.
\end{thm}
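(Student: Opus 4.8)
The plan is to adapt to the matching setting the classical argument of Woolbright and, independently, Brouwer, de Vries and Wieringa, by which an $n\times n$ Latin square has a partial transversal of length at least $n-\sqrt n$. Under the usual dictionary the rows and columns of the square become the two sides $A,B$ of the bipartite graph $G$, the symbols become the $n$ matchings $F_1,\dots,F_n$, and a partial transversal becomes a partial rainbow matching. A direct reduction to the Latin-square theorem seems unavailable --- the $F_i$ need not be perfect matchings of a common $K_{m,m}$ and may even share edges, so nothing forces them into a Latin square --- and a self-contained argument is needed. So let $R=\{e_i : i\in I\}$, $e_i\in F_i$, be a rainbow matching of maximum size $t$, put $k=n-t$ for its deficiency and $J=[n]\setminus I$ for the set of unused matchings, and aim to prove $k^2\le n$, which is equivalent to $t\ge n-\sqrt n$.

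The two structural facts I would extract from maximality are the following. First, for every $j\in J$ every edge of $F_j$ meets $R$, and since $R$ covers only $2t$ vertices, a short incidence count shows that at least $2k$ edges of $F_j$ meet $R$ in a single vertex. Second, any such edge $f\in F_j$, meeting $e_i\in R$ in one vertex, yields a \emph{swap}: $R':=R-e_i+f$ is again a maximum rainbow matching, in which $i$ is now unused and $j$ is now used, and it differs from $R$ on only one of the two sides, where exactly one vertex (an endpoint of $e_i$) has been released and one (an endpoint of $f$) newly covered. Iterating swaps generates a family of maximum rainbow matchings reachable from $R$, and in none of them can an unused matching have an edge disjoint from the current matching.

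The core of the argument is to run this local-swap / alternating-chain analysis and squeeze out $k^2\le n$ by double counting: each step down a chain releases a vertex on one side while an unused matching appears, and maximality forces the released vertex to be blocked by that matching through a vertex on the other side; tracking these blocking vertices along chains started from all $k$ unused matchings pushes on the order of $k^2$ distinct blocks into a pool of size at most $n$. I expect the main obstacle --- and the point where the present statement genuinely extends the Latin-square theorem --- to be keeping these chains under control: the Latin-square proof leans on the regularity that each symbol occurs exactly once in every row and in every column, and here that must be replaced by the weaker facts that each $F_j$ is a matching of size $n$ and that a swap perturbs only one side by one vertex. In particular one has to handle edges of $F_j$ meeting two edges of $R$ (one endpoint on each side), as well as the possibility that a newly unused matching fails altogether to cover the released vertex --- cases with no analogue in the Latin-square formulation, which have to be absorbed by extending the chain by one more step.
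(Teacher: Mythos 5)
Your setup is sound as far as it goes: taking a maximum rainbow matching $R$ of size $t=n-k$, observing that every edge of an unused $F_j$ must meet $V(R)$, the incidence count $2x+y\le 2t$, $x+y=n$ giving at least $2k$ edges of $F_j$ meeting $R$ in exactly one vertex, and the swap lemma ($R-e_i+f$ is again a maximum rainbow matching differing from $R$ by one vertex on one side) are all correct, and this is indeed the right opening for a Woolbright/Brouwer--de Vries--Wieringa style argument. But what you have written is a plan, not a proof. The entire content of the theorem lies in the step you describe only in intent: ``run this local-swap / alternating-chain analysis and squeeze out $k^2\le n$ by double counting.'' You never define the chains precisely, never specify which vertices are the ``blocks,'' and never prove the two facts on which the count lives or dies: (i) that each of the $k$ unused matchings forces a \emph{new} batch of blocking vertices at each stage (distinctness across stages and across the $k$ chains is exactly what the Latin-square regularity --- each symbol once per row and per column --- hands you for free, and exactly what is no longer automatic when the $F_i$ are arbitrary size-$n$ matchings that may share edges and need not be perfect); and (ii) that the accumulated blocks all land in a set of size at most $n$. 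The two degenerate cases you flag yourself --- an edge of an unused matching meeting two edges of $R$, and a released vertex not covered at all by the newly unused matching --- are precisely where the adaptation requires work, and ``absorbed by extending the chain by one more step'' is an assertion, not an argument; without it one does not even get $k^2=O(n)$, let alone the clean constant needed for $n-\sqrt n$.

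For calibration: the paper itself does not prove this theorem either --- it is quoted as a ``slight generalization'' of Woolbright and Brouwer--de Vries--Wieringa, with the details deferred to the manuscript \cite{AKZ} --- so there is no in-paper argument to measure you against. Judged on its own, your proposal identifies the correct strategy and the correct obstacles, but it stops exactly at the point where the proof has to begin: a precise iterative construction (e.g.\ a nested sequence of released vertices on one side, with each unused matching forced to match them into an ever-growing set on the other side) together with a counting argument yielding $1+2+\cdots+k\le n$ or $k^2\le n$. Until that construction and count are carried out, with the distinctness and the degenerate cases handled, the statement is not established.
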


When the $n$ matchings form a partition of $K_{n,n}$ a tighter bound was achieved by Shor and Hatami \cite{HatShor08}:

\begin{thm}\label{thm1:2}
A partition of edges of the complete bipartite graph $K_{n,n}$ into $n$ matchings, each of size $n$, has a rainbow matching of size $n-O(log^2n)$.
\end{thm}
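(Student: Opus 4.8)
The plan is to argue by contradiction using a maximum partial rainbow matching together with an augmenting-path (rotation) analysis, sharpening the conflict-counting of Aharoni, Charbit and Howard. Suppose $F_1,\dots,F_n$ are matchings in a bipartite graph, each of size at least $m=\lfloor 5n/3\rfloor$, that admit no full rainbow matching. I would choose a partial rainbow matching $M$ of maximum size $k$, say using one edge $e_i\in F_i$ for each $i$ in an index set $I$ with $|I|=k\le n-1$, and let $F$ be any matching whose index lies outside $I$. Since $M$ is maximum, every edge of $F$ must meet the $2k$ vertices covered by $M$, and no single edge swap can extend $M$; these two facts are the structural constraints I intend to exploit. (The earlier bounds in Theorems~\ref{thm1:1} and~\ref{thm1:2} do not help directly, since they concern matchings of size exactly $n$ and a rainbow matching of size $n-1$, whereas here the matchings are larger and we seek a \emph{full} rainbow matching.)

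First I would set up the alternating structure. Call an edge $f\in F$ \emph{singly blocked} if it meets $M$ in exactly one vertex, and \emph{doubly blocked} if both its endpoints are covered by $M$. A singly blocked edge $f$ meeting $e_i$ at a vertex $v$ admits a rotation: deleting $e_i$ frees its other endpoint and releases the matching $F_i$, after which one seeks a free edge of $F_i$; maximality of $M$ forbids completing any such rotation into a genuine augmentation. Iterating this produces an alternating forest rooted at $F$, each of whose released matchings contributes its own blocked edges, recorded together with the vertices of $M$ responsible for blocking them. The engine of the proof is to follow these alternating paths and carefully track which vertices become free along the way and which matchings have been released.

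The decisive step is a two-way count. On one hand, each matching appearing in the forest has at least $m$ disjoint edges, all of which must be blocked by vertices covered by the current rainbow matching; since a matching's edges are vertex-disjoint, each covered vertex blocks at most one edge per matching, which bounds the total number of blocked edges in terms of $k$. On the other hand, doubly blocked edges consume two covered vertices at once, so an abundance of them is expensive and drives $k$ upward, whereas a scarcity of them leaves many singly blocked edges, which fuel long alternating paths and, eventually, a forbidden augmentation. Balancing these two regimes is exactly where the constant $5/3$ emerges, and improving the $7/4$ bound to $5/3$ should hinge on extracting more augmenting power from singly blocked edges together with a sharper count of how often a freed vertex can be re-blocked. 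I expect the main obstacle to be precisely this bookkeeping: guaranteeing that vertices freed during rotations are not silently recounted as blockers, and ruling out short alternating cycles that stall the augmentation without yielding the target inequality. Once the count forces a contradiction with $k\le n-1$ under the hypothesis $m\ge\lfloor 5n/3\rfloor$, the theorem follows; the small additive slack hidden in the floor $\lfloor 5n/3\rfloor$ ought to absorb the rounding corrections.
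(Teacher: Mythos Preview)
Your proposal does not address the stated theorem at all. Theorem~\ref{thm1:2} is the Hatami--Shor result: given a partition of $K_{n,n}$ into $n$ perfect matchings (equivalently, an $n\times n$ Latin square), there is a rainbow matching of size $n-O(\log^2 n)$. The paper does not prove this; it is quoted from \cite{HatShor08} as background, and its proof is a delicate analysis of partial transversals in Latin squares that has nothing to do with the $5/3$ constant or the Aharoni--Charbit--Howard argument.

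What you have written is instead a sketch for Theorem~\ref{thm1}, the paper's main result that $g(n)\le\lfloor 5n/3\rfloor$. You even acknowledge this explicitly when you write that Theorems~\ref{thm1:1} and~\ref{thm1:2} ``do not help directly'' because ``here the matchings are larger and we seek a \emph{full} rainbow matching'': that sentence only makes sense if you are proving Theorem~\ref{thm1}, not Theorem~\ref{thm1:2}. The hypothesis $m=\lfloor 5n/3\rfloor$, the goal of a full rainbow matching, the comparison with the $7/4$ bound---all of this belongs to Theorem~\ref{thm1}. For Theorem~\ref{thm1:2} the matchings have size exactly $n$, they partition $E(K_{n,n})$, and the target is a rainbow matching of size $n-O(\log^2 n)$, not $n$. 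Your alternating-forest and double-counting outline, whatever its merits for Theorem~\ref{thm1}, simply does not engage with the statement you were asked to prove.
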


If one insists on finding a rainbow matching of size $n$ in a family of matchings, each of size $n$, then the size of the family must be raised dramatically, as proved by Drisko \cite{Drisko98}:

\begin{thm}\label{thm1:3}
A family of $2n-1$ matchings, each of size $n$, in a bipartite graph has a rainbow matching of size $n$.
\end{thm}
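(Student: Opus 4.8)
The plan is to argue by contradiction, augmenting a maximum rainbow matching, with the quantity $2n-1$ entering precisely as the supply of spare colours needed to keep an augmentation rainbow. Write the bipartite graph as $G=(X\cup Y,E)$ and the matchings as $F_1,\dots,F_{2n-1}$, thinking of the index $i$ as the \emph{colour} of the edges of $F_i$. Suppose no rainbow matching of size $n$ exists, and let $R=\{e_1,\dots,e_r\}$ be a rainbow matching of maximum size $r$, with $e_i\in F_{\sigma(i)}$ for distinct colours $\sigma(1),\dots,\sigma(r)$; by assumption $r\le n-1$, so the number of \emph{unused} matchings is at least $2n-1-r\ge n$. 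Write $e_i=(a_i,b_i)$ with $a_i\in X$, $b_i\in Y$, and call a vertex \emph{free} if $R$ does not cover it. Note first that no unused edge can have both endpoints free, since otherwise $R$ plus that edge would be a larger rainbow matching.

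First I would isolate the correct notion of augmentation. A \emph{rainbow augmenting path} is an alternating path starting at a free vertex $x_0\in X$ and ending at a free vertex $y_k\in Y$ of the form $x_0\,f_0\,b_{i_1}\,e_{i_1}\,a_{i_1}\,f_1\,b_{i_2}\,e_{i_2}\,a_{i_2}\cdots a_{i_k}\,f_k\,y_k$, where the $e_{i_j}$ are edges of $R$ traversed from their $Y$-endpoint to their $X$-endpoint, and the forward edges $f_0,\dots,f_k$ lie in \emph{distinct unused} matchings. The colour condition is exactly what makes the swap work: replacing $\{e_{i_1},\dots,e_{i_k}\}$ by $\{f_0,\dots,f_k\}$ yields a matching of size $r+1$ whose colour set consists of the surviving colours $\{\sigma(i):i\in[r]\}\setminus\{\sigma(i_1),\dots,\sigma(i_k)\}$ together with $k+1$ new distinct unused colours. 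These are pairwise distinct, so the result is a rainbow matching strictly larger than $R$, contradicting maximality. Thus the whole problem reduces to producing one rainbow augmenting path.

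Next I would hunt for such a path by a breadth-first alternating search. Starting from the free $X$-vertices, I repeatedly extend: from a reached $X$-vertex follow an unused-matching edge whose colour is not yet spent on the current branch to a $Y$-vertex; if that vertex is free we have our augmenting path, and otherwise it is some $b_j$, whence we cross $e_j$ to reach $a_j$ and continue. Let $S\subseteq[r]$ record the indices of the $R$-edges reached. If the search never reaches a free $Y$-vertex, then every admissible unused edge leaving the reached $X$-vertices is ``trapped'', landing only in $\{b_j:j\in S\}$. Since each unused matching is itself a matching and $|S|\le r\le n-1$, at most $n-1$ of its $n$ edges can lie in this trapped target set; weighing the forced trapped edges of the $\ge n$ unused matchings against this capacity constraint is meant to force $|S|>r$, which is impossible. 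Carrying out this count cleanly, using that $R$ is maximum so that non-trapped edges cannot escape to free endpoints, is the combinatorial heart of the argument.

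The hard part is the colour bookkeeping inside the search, and this is exactly where the hypothesis of $2n-1$ matchings is consumed: to augment a matching of size $r=n-1$ up to size $n$ one may have to traverse all $n-1$ edges of $R$, hence use up to $n$ forward edges of \emph{distinct} unused colours, so $n$ unused matchings are genuinely needed on top of the $n-1$ used ones, giving the total $2n-1$. The difficulty is that when the search wants to pass a vertex $a_j$ the edge it uses must avoid the colours already spent on that branch, and an arbitrary unused edge need not be available in a fresh colour, so a naive search can stall. The crux is to organise the search so that a fresh colour is always at hand (for instance by reserving a private unused colour for each reached $R$-edge, feasible because the number of unused colours, $\ge n$, exceeds $|S|$), and to show that a genuinely stalled search produces the trapped configuration above. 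I would calibrate the count so that $2n-1$ is tight, cross-checking against the extremal family of $2n-2$ matchings built from $n-1$ copies each of the two perfect matchings of a $2n$-cycle: their union is that cycle, whose only perfect matchings are monochromatic, so it has no rainbow matching of size $n$. Should the direct colour accounting prove unwieldy, the statement also follows from the Aharoni--Haxell topological Hall theorem, applied by estimating the connectivity of the matching complex of $\bigcup_{i\in I}F_i$, a graph of maximum degree two.
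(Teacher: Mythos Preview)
The paper does not prove this theorem; it is quoted as Drisko's result with a citation, so there is no in-paper proof to compare against. What you have written is therefore being judged on its own merits.

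Your overall architecture is sound and is essentially how modern proofs of Drisko's theorem go: grow an alternating forest rooted at the free $X$-vertices, and assign a \emph{distinct} unused colour to each forward tree edge, which is possible because the number of unused colours is at least $n>r\ge|S|$. What is missing is the actual count that shows the search never stalls, and your paragraph about ``weighing the forced trapped edges of the $\ge n$ unused matchings'' against a ``capacity constraint'' to force $|S|>r$ is not the right shape: the contradiction is not global across all unused matchings, it is local to the single fresh colour you pick at each step. Concretely, suppose the current reached set is $S$ with $|S|=s$ and $c$ is a fresh unused colour. The $n$ edges of $F_c$ have $n$ distinct $X$-endpoints; at most $r-s$ of these lie in the unreached covered set $\{a_j:j\notin S\}$, so at least $n-(r-s)$ edges of $F_c$ leave a reached $X$-vertex. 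Among those, at most $s$ can land in $\{b_j:j\in S\}$ since $F_c$ is a matching, leaving at least $n-r\ge 1$ edges from reached $X$ to $Y\setminus\{b_j:j\in S\}$; such an edge either hits a free $Y$-vertex (augment) or a new $b_{j'}$ (extend $S$). Hence each step strictly grows $S$ or finishes, and since $|S|\le r\le n-1$ while you have at least $n$ fresh colours, after at most $r+1\le n$ steps you must hit a free $Y$-vertex. That is the whole ``combinatorial heart'' you left open.

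Two further remarks. First, your description of the stalled state (``every admissible unused edge leaving the reached $X$-vertices is trapped, landing only in $\{b_j:j\in S\}$'') conflates edges of \emph{all} unused matchings with edges of the one fresh colour under consideration; once you commit a colour to a tree edge you should not re-examine it, so only the currently fresh $F_c$ matters. Second, your fallback via the Aharoni--Haxell theorem is correct in principle but is a much heavier hammer than needed, and invoking it would defeat the purpose of an elementary proof; the two-line count above is all that is required.
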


Drisko provided an example showing that the bound $2n-1$ is tight.

On the other hand, if we want to find a full rainbow matching among $n$ matchings, we may have to increase the size of the matchings. Let $g(n)$ be the least number such that every family of $n$ matchings, each of size at least $g(n)$, in a bipartite graph, has a full rainbow matching. Aharoni and Berger \cite{AhBer} posed the following generalization of Conjecture~\ref{conj3}:

\begin{conj}\label{conj4}
$g(n)=n + 1$.
\end{conj}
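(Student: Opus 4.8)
The plan is to establish the two inequalities $g(n)\ge n+1$ and $g(n)\le n+1$ separately; the first is elementary and the second is the crux.

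\textbf{Lower bound.} To see that $g(n)\ge n+1$ I would exhibit a family of $n$ matchings, each of size $n$, with no full rainbow matching. Take the even cycle $C_{2n}$ on vertices $v_1,\dots,v_{2n}$, which is bipartite, and let $M$ and $M'$ be its two perfect matchings (the ``odd'' and ``even'' edge classes). Set $F_1=M$ and $F_2=\cdots=F_n=M'$. Each $F_i$ is a matching of size $n$. A full rainbow matching would be a set of $n$ pairwise disjoint edges, one from each $F_i$, hence a perfect matching of $C_{2n}$; but the only perfect matchings of $C_{2n}$ are $M$ and $M'$. The matching $M$ can draw edges only from $F_1$ (one family), so at most one of its $n$ edges is available; the matching $M'$ can draw edges only from $F_2,\dots,F_n$ ($n-1$ families), so at most $n-1$ of its $n$ edges are available. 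By the pigeonhole principle neither can be assembled, so there is no full rainbow matching and $g(n)>n$.

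\textbf{Upper bound, setup.} For $g(n)\le n+1$ I would argue by contradiction along the lines of Drisko's augmenting-path method (Theorem~\ref{thm1:3}). Suppose $F_1,\dots,F_n$ are matchings, each of size $n+1$, admitting no full rainbow matching, and let $R=\{e_1,\dots,e_r\}$ with $e_i\in F_i$ be a \emph{maximum} partial rainbow matching, so $r<n$ and at least one family, say $F_n$, is unrepresented. By maximality every edge of $F_n$ must meet $R$ (a disjoint edge would extend $R$); since the $n+1$ edges of $F_n$ are pairwise disjoint and $R$ covers only $2r$ vertices, this already forces $r\ge\frac{n+1}{2}$.

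\textbf{Upper bound, augmentation.} The heart of the argument is to start from an edge of the unused family $F_n$ and build a \emph{rainbow alternating path}: repeatedly replace a representative $e_i$ by another edge of $F_i$, freeing one of its endpoints, and follow the conflicts this creates among the remaining representatives. Each family entered along the path contributes its $n+1$ edges as candidate replacements, while only the $2r$ vertices covered by $R$ are available to block them. A careful potential/charging scheme, assigning blocked candidate edges to covered vertices, should show that when the matchings are as large as $n+1$ the path cannot terminate without producing an augmentation -- an edge extending $R$, or a cyclic swap enlarging the rainbow matching -- contradicting the maximality of $R$. This is precisely the mechanism behind the bounds $g(n)\le\lfloor\frac74 n\rfloor$ and $g(n)\le\lfloor\frac53 n\rfloor$, now to be run with the optimal constant $1$.

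\textbf{Main obstacle.} The difficulty is that the naive alternating-path argument only guarantees augmentation once the number of families exceeds roughly $2r$ (this is why Drisko needs $2n-1$ matchings), and even the refined charging schemes leak a constant factor, yielding $\frac74$ and then $\frac53$ rather than the exact $1$. Closing the gap to $n+1$ demands controlling the extremal configurations precisely -- which are essentially the cycle examples from the lower bound -- so that no slack remains in the count. I expect that a tight bound will require a genuinely different tool, most plausibly a topological Hall-type theorem bounding the homological connectivity of the independence complex of the associated rainbow hypergraph; the currently available connectivity estimates are not strong enough to force a full rainbow matching from matchings of size $n+1$, which is exactly why the statement remains a conjecture rather than a theorem.
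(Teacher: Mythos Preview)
The statement you are addressing is Conjecture~\ref{conj4}, and the paper does \emph{not} prove it; it remains open. The paper's actual contribution toward it is Theorem~\ref{thm1}, which establishes only the weaker upper bound $g(n)\le\lfloor\frac{5}{3}n\rfloor$. So there is no ``paper's own proof'' to compare your proposal against.

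Your write-up is in fact consistent with this. The lower bound construction with $C_{2n}$ is correct and standard (for $n>1$): any full rainbow matching would have to be a perfect matching of $C_{2n}$, hence $M$ or $M'$, and neither can be realized as a system of distinct representatives of $F_1=M,\,F_2=\cdots=F_n=M'$. For the upper bound you sketch the alternating/augmenting-path mechanism that underlies Drisko's theorem and the $\frac{7}{4}$ and $\frac{5}{3}$ results, and then you explicitly concede in the final paragraph that the counting leaks a constant factor and that closing the gap to $n+1$ ``remains a conjecture rather than a theorem.'' That is an honest assessment, but it means your proposal is a survey of the difficulty, not a proof.

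To be explicit about the genuine gap: the step ``a careful potential/charging scheme \ldots\ should show that when the matchings are as large as $n+1$ the path cannot terminate without producing an augmentation'' is exactly the missing idea. No such scheme is known; the best charging arguments to date give $\frac{5}{3}n$ (this paper), and the topological connectivity estimates you allude to also stop short. Until that step is supplied, the upper bound half of your plan is a hope rather than an argument.
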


It is easy to see, using a greedy algorithm, that $g(n)\le 2n - 1$. Aharoni, Charbit and Howard \cite{ACH} showed:

\begin{thm}\label{thm1:4}
$g(n)\le\lfloor\frac{7}{4}n\rfloor$.
\end{thm}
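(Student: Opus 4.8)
The plan is to argue by contradiction using a maximum partial rainbow matching, with Drisko's theorem (Theorem~\ref{thm1:3}) as the engine that powers the augmentation step. Suppose $F_1,\dots,F_n$ are matchings in a bipartite graph, each of size at least $m:=\lfloor\frac74 n\rfloor$, admitting no full rainbow matching. Choose a rainbow matching $R=\{e_1,\dots,e_k\}$ of maximum size $k$, so $k\le n-1$, and among all maxima pick one that is extremal for a suitable secondary parameter (for instance the multiset of colors used, ordered so as to make the subsequent swaps monotone). Let $F$ be one of the $n-k$ unused matchings; maximality forces every edge of $F$ to meet $V(R)$. The first, elementary, step is a double count: classifying each $e_i$ according to whether $F$ meets it at both endpoints (say $h$ such ``heavy'' edges) or at exactly one endpoint ($\ell$ such ``light'' edges), and using that distinct edges of $F$ occupy distinct vertices of $V(R)$, one obtains \[ m\le |F|\le 2h+\ell\le 2k. \] Since $2k$ may exceed $m$, this alone is not a contradiction, and the real work lies in turning the light edges into augmentations.

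Next I would set up the augmentation. A light edge $e_i$, met by some $f\in F$ at only one endpoint, can be swapped out to form $R'=R-e_i+f$, a rainbow matching of the same size whose freed color is that of the matching supplying $e_i$. By maximality that matching is again fully blocked by $R'$, and iterating produces alternating walks through the colors used by $R$. The key idea is to collect the unused matchings together with the matchings met along these alternating walks into a single auxiliary family, restrict each member to the edges disjoint from the ``core'' of $R$ common to all the walks, and then apply Theorem~\ref{thm1:3} to this family. Because an alternating structure on $t$ matchings is exactly the regime in which Drisko extracts a rainbow matching of size roughly $t/2$ from $t$ matchings, the auxiliary family---which contains both the colors encountered along the walks and the surplus of unused matchings---will be forced to contain a rainbow matching strictly larger than $R$, provided the restricted matchings remain large enough for Drisko to apply. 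That size condition is precisely what the hypothesis $m\ge\lfloor\frac74 n\rfloor$ is designed to guarantee.

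The coefficient $\frac74$ should emerge from optimizing this trade-off. Heavy edges are cheap for $F$ to block (they absorb two edges of $F$ per color) but they enlarge the pool of colors available to the Drisko sub-instance; light edges are expensive to block yet are exactly the edges that drive augmentation. Balancing the count $m\le 2h+\ell$ against the size lost when restricting the auxiliary matchings to vertices off $R$ (at most $2k\le 2(n-1)$ per matching, though in fact only the blocking vertices truly matter) should yield the threshold $m\le\frac74 n$ in the no-augmentation case, contradicting $m\ge\lfloor\frac74 n\rfloor$. I expect the main obstacle to be the \emph{closure} of the augmentation: showing that the alternating-walk/Drisko machinery cannot be simultaneously defeated by a configuration that both covers $F$ with few vertices and resists every swap. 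The extremal two-type example witnessing tightness of Drisko's bound $2n-1$ is the natural guide for which configurations must be excluded, and pushing the bookkeeping down to the exact constant $\frac74$---rather than a weaker linear bound such as $2n$---is the delicate part of the argument.
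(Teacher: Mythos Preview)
The paper does not contain a proof of Theorem~\ref{thm1:4}; it is quoted as prior work of Aharoni, Charbit and Howard~\cite{ACH}, and the paper's own contribution is the stronger bound $g(n)\le\lfloor\frac{5}{3}n\rfloor$ of Theorem~\ref{thm1}. So there is no ``paper's own proof'' to compare your proposal against. For context, the paper's method for the $\frac{5}{3}$ bound is a direct, elementary augmentation argument: starting from a maximum rainbow matching $R$ of size $n-1$, it builds an explicit alternating structure (the sets $Z$, $R'$, $R''$ and the path of edges $e_1,\dots,e_k$ between $X$ and $W'$) and shows by hand that this structure must close up into an augmenting configuration. Drisko's theorem is not invoked at any point.

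Your proposal, by contrast, is not a proof but a programme, and it has a genuine gap at its core. You never specify the auxiliary family to which Theorem~\ref{thm1:3} is to be applied, nor verify that its members are matchings of the required size, nor check that the rainbow matching Drisko would produce actually combines with the ``core'' of $R$ to give $n$ disjoint edges of distinct colors. The sentence ``Because an alternating structure on $t$ matchings is exactly the regime in which Drisko extracts a rainbow matching of size roughly $t/2$ from $t$ matchings'' is suggestive but does not match Drisko's hypothesis, which requires $2t-1$ matchings each of size $t$; you have not shown how the colors gathered along your walks, together with the unused colors, assemble into a family meeting that numerical threshold. You yourself flag the ``closure of the augmentation'' and the derivation of the constant $\frac{7}{4}$ as open issues---those are precisely the substance of any such proof, and without them what remains is the easy observation $m\le 2k$, which gives only the trivial bound $g(n)\le 2n-1$.
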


In Theorem~\ref{thm1} we use a different method to improve the bound in Theorem~\ref{thm1:4} to $\lfloor\frac{5}{3}n\rfloor$.

\section{A rainbow matching of size  $n$}
Let $G$ be a bipartite graph with sides $U$ and $W$.
\begin{prop}\label{prop1}
Let $\F =\{F_1,\ldots,F_n\}$ be a family of $n$ matchings in $G$, where $|F_i|=\lfloor\frac{3}{2}n\rfloor$, $i=1,\ldots,n$. Then, $\F$ has a rainbow matching of size $n-1$.
\end{prop}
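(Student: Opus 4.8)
The plan is to argue by contradiction. Suppose the largest rainbow matching $R$ of $\F$ has size $r\le n-2$, and set $S=V(R)$ and $T=V(G)\setminus S$, so $|S|=2r$. Since $R$ uses fewer than $n$ of the matchings, at least two of them, say $F_s$ and $F_t$ with $s\ne t$, are not used by $R$. Maximality of $R$ forces every edge of $F_s$ and of $F_t$ to meet $S$: an edge contained in $T$ could be appended to $R$ in its own colour, enlarging $R$. Hence for $j\in\{s,t\}$ we may split $F_j$ into the edges lying inside $S$ and the set $F_j^{**}$ of edges having exactly one endpoint in $S$. Counting incidences in $S$ (each vertex of $S$ lies on at most one edge of $F_j$) gives $|F_j^{**}|\ge 2|F_j|-|S|=2\lfloor\tfrac32 n\rfloor-2r$. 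Let $A_j\subseteq S$ be the set of $S$-endpoints of the edges of $F_j^{**}$; since $F_j$ is a matching, $|A_j|=|F_j^{**}|$.

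The key step is the following augmenting lemma: if some edge $e=\{b,c\}$ of $R$ satisfies $b\in A_s$ and $c\in A_t$, then $R$ is not maximum. To see this, let $f=\{b,a\}$ be the (unique) edge of $F_s$ at $b$ and $g=\{c,d\}$ the (unique) edge of $F_t$ at $c$; by definition of $A_s,A_t$ we have $a,d\in T$, and $a\ne d$, for otherwise $f,e,g$ would form a triangle in the bipartite graph $G$. Then $M=(R\setminus\{e\})\cup\{f,g\}$ is a matching (deleting $e$ frees $b,c$, and $a,d$ lie outside $S$), $|M|=r+1$, and assigning $f$ the colour $s$ and $g$ the colour $t$ makes $M$ rainbow, because $s\ne t$ and neither colour is used by $R\setminus\{e\}$. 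This contradicts the maximality of $R$.

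Consequently no edge of $R$ has one endpoint in $A_s$ and the other in $A_t$. Since $R$ pairs up the vertices of $S$, every vertex of $A_s\cap A_t$ must then be matched by $R$ to a vertex outside $A_s\cup A_t$; as these images are distinct, $|S\setminus(A_s\cup A_t)|\ge|A_s\cap A_t|$, equivalently $|S|\ge|A_s|+|A_t|$. Plugging in $|S|=2r$ and $|A_j|\ge 2\lfloor\tfrac32 n\rfloor-2r$ yields $2r\ge 2\bigl(2\lfloor\tfrac32 n\rfloor-2r\bigr)$, hence $r\ge\tfrac23\lfloor\tfrac32 n\rfloor\ge n$, contradicting $r\le n-2$. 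Therefore the largest rainbow matching of $\F$ has size at least $n-1$, which proves the proposition (the argument needs two unused matchings, which is why it stops at $n-1$).

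I expect the delicate part to be the augmenting lemma: one must verify that $M$ genuinely is a rainbow matching, and this is exactly where having \emph{two} distinct unused colours — one available on each endpoint of the deleted edge $e$ — is essential, together with the small but necessary observation that $a\ne d$. The closing count then goes through comfortably, so the hypothesis $|F_i|=\lfloor\tfrac32 n\rfloor$ is in fact more than enough for this step.
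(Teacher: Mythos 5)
Your proof is correct and follows essentially the same route as the paper: assuming a maximum rainbow matching $R$ of size at most $n-2$, you exploit the two unused matchings, all of whose edges must meet $V(R)$ and which therefore send many edges from $V(R)$ to uncovered vertices, and a counting argument to locate one edge of $R$ that can be traded for two edges, one of each unused colour, ending at uncovered vertices; the paper organizes the same count by bipartite sides (its pigeonhole $|U'|+|W'|>|R|$) where you use $|A_s|+|A_t|\le|S|$. The only blemish is the final step $\tfrac23\lfloor\tfrac32 n\rfloor\ge n$, which fails for odd $n$ (the left side is $n-\tfrac13$), but since $n-\tfrac13>n-2$ (and $r$ is an integer) the contradiction with $r\le n-2$ stands, so nothing is lost.
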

\begin{proof}
Assume, by contradiction, that a rainbow matching $R$ of maximal size has size $|R|\le n-2$.
Without loss of generality we may assume that $R\cap F_{n-1}=\emptyset$ and $R\cap F_n=\emptyset$.
Let $X$ and $Y$ be the subsets of $U$ and $W$, respectively, that are not covered by $R$ (see Figure~\ref{fig1}). Since $R$ has maximal size the whole set $X$ is matched by $F_{n-1}$ with some $W'\subset W\setminus Y$ and the whole set $Y$ is matched by $F_n$ with some subset $U'\subset U\setminus X$. Since $|R|\le n-2$ we have $|U'|+|W'|> |R|$. It follows that there exist edges $e_1\in F_{n-1}$, $e_2\in F_n$ and $e\in R$ such that $e_1\cap e\cap W\ne \emptyset$ and $e_1\cap X\ne \emptyset$, and similarly, $e_2\cap e\cap U\ne \emptyset$ and $e_2\cap Y\ne \emptyset$. Clearly, $\left(R\setminus\{e\}\right)\cup\{e_1,e_2\}$ is a rainbow matching, contradicting the maximality property of $R$.
\begin{figure}[h!]
\begin{center}
\includegraphics[scale=0.4]{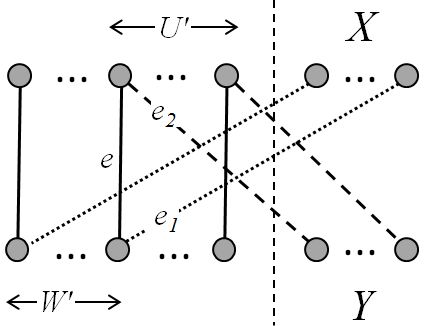}
\end{center}
\caption{Extending the rainbow matching $R$ in Proposition~\ref{prop1}. The solid lines are the edges of $R$, dotted lines represent edges in $F_{n-1}$, and dashed lines represent edges in $F_n$.}
\label{fig1}
\end{figure}

\end{proof}

\begin{thm}\label{thm1}
Let $\F =\{F_1,\ldots,F_n\}$ be a family of $n$ matchings in $G$, each of size $\lfloor\frac{5}{3}n\rfloor$. Then, $\F$ has a full rainbow matching.
\end{thm}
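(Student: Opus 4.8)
The plan is to argue by contradiction and to augment a maximal rainbow matching by means of an alternating‑walk (rotation) argument, with the constant $\tfrac53$ coming out of one clean counting inequality. Assume $\F$ has no full rainbow matching and let $R$ be a maximal rainbow matching. Since $\lfloor\tfrac53 n\rfloor\ge\lfloor\tfrac32 n\rfloor$, Proposition~\ref{prop1} gives $|R|\ge n-1$, and since there is no full rainbow matching, $|R|=n-1$; relabel so that $R=\{e_1,\dots,e_{n-1}\}$ with $e_i\in F_i$ and $F_n$ is unrepresented. Write $X=U\setminus V(R)$, $Y=W\setminus V(R)$, and call a vertex \emph{free} if it lies in $X\cup Y$ and \emph{used} otherwise, so that $|V(R)|=2n-2$. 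Maximality of $R$ gives the basic fact I will use repeatedly: whenever a rainbow matching $R'$ of size $n-1$ misses a matching $F_j$ and $F_j$ has an edge disjoint from $V(R')$, that edge extends $R'$ to a full rainbow matching — a contradiction.

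The core computation is: if a rainbow matching $R'$ of size $n-1$ misses $F_j$, then since $|F_j|=\lfloor\tfrac53 n\rfloor$ and at most $n-1$ edges of the matching $F_j$ can lie entirely inside the $(n-1)+(n-1)$–vertex set $V(R')$, at least $\lfloor\tfrac53 n\rfloor-(n-1)=\lfloor\tfrac23 n\rfloor+1$ edges of $F_j$ have a free endpoint with respect to $R'$. This is exactly where the hypothesis is used: $\lfloor\tfrac53 n\rfloor=n+\lfloor\tfrac23 n\rfloor$. If one of these edges has \emph{two} free endpoints we are done by the basic fact; otherwise each such edge $g=(x,w)$ with $x$ free and $w$ used — say $w$ lying on the edge $e\in R'$ whose color is $c$ — can be \emph{rotated in}: $R'-e+g$ is again a rainbow matching of size $n-1$, now missing $F_c$, and the only change in free/used status is that the other endpoint of $e$ becomes free while $x$ becomes used. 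Starting from $R$ and $F_n$, I would iterate this rotation to obtain a sequence of size‑$(n-1)$ rainbow matchings and a sequence of missed colors $n=c_0,c_1,c_2,\dots$, and show that it must terminate in the ``two free endpoints'' case.

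To force termination one must prevent the walk from cycling. I would organise the rotations so that, whenever possible, one uses an edge of the missed matching whose \emph{used} endpoint lies in $W$ (these rotations fix the $W$‑side footprint of $R$), and track a monotone quantity on the $U$‑side — morally, a $U$‑vertex that has become free is never allowed to become used again — so that the missed colors $c_0,c_1,\dots$ are distinct and the process has length at most $n$. When the walk can no longer be extended, the set $S$ of visited colors has the property that for each $j\in S$ the $\ge\lfloor\tfrac23 n\rfloor+1$ free‑endpoint edges of $F_j$ have their free endpoints confined to a bounded ``frontier'' set determined by $S$ together with the common footprint; comparing $\sum_{j\in S}(\lfloor\tfrac23 n\rfloor+1)$ with the number of available (free‑endpoint, matching) incidences then forces $S$ to contain either a ``two free endpoints'' edge or a matching with fewer than $\lfloor\tfrac53 n\rfloor$ edges — either way a contradiction.

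The main obstacle is precisely this last step: making the rotation process canonical enough that it provably cannot cycle, and then extracting from a ``stuck'' configuration a counting inequality that fails exactly at matching size $\lfloor\tfrac53 n\rfloor$. Getting the constant down to $\tfrac53$, rather than a weaker value, requires using the bound ``$\lfloor\tfrac23 n\rfloor+1$ free‑endpoint edges'' for \emph{all} the matchings visited by the walk at once — the geometry of how the frontier of free vertices grows along the walk is where the real work is, and it is the part I would expect to be most delicate to get right.
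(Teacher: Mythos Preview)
Your setup and the core count --- that any matching missed by a size-$(n-1)$ rainbow matching has at least $\lfloor 2n/3\rfloor+1$ edges with a free endpoint --- match the paper, and the rough strategy of rotating along such edges is the right one. But the proposal has a genuine gap, which you yourself flag: you do not actually prove that your rotation process terminates. The device you suggest (``prefer rotations whose used endpoint lies in $W$; never let a freed $U$-vertex become used again'') does not by itself force the missed colors $c_0,c_1,\dots$ to be distinct: after step $j$ the color $c_j$ is reinserted at step $j+1$, so nothing you have written rules out $c_i=c_j$ for $i>j+1$. And the final step --- ``compare $\sum_{j\in S}(\lfloor 2n/3\rfloor+1)$ with the number of available incidences'' --- is not an inequality but a hope; you have not identified what the frontier set is or why its size is small enough to give a contradiction at the threshold $\lfloor 5n/3\rfloor$.

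The paper fills exactly this gap, and does so by \emph{restricting where the walk lives before it starts}. It first singles out $R'\subset R$, the edges whose $U$-endpoint is matched to $Y$ by $F_n$; then $|R'|\ge\lfloor 2n/3\rfloor+1$, so $|R\setminus R'|\le\lceil n/3\rceil-2$, and a pigeonhole step shows every colour in $\F'=\{F_i:F_i\cap R'\neq\emptyset\}$ has at least $\lceil n/3\rceil$ edges from $X$ into $W'$, the $W$-endpoints of $R'$. The walk is then built entirely inside $R'$: disjoint edges $e_1,\dots,e_k$ from $X$ to $W'$ and edges $r_1,\dots,r_{k+1}\in R'$ with $e_i,r_i\in F_i$ and $e_i$ meeting $r_{i+1}$. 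Termination is a clean dichotomy rather than a monotone invariant: if the next edge $e_{k+1}$ hits some earlier $r_t$, the walk closes and one augments immediately using the $F_n$-edge from $r_t$'s $U$-endpoint to $Y$; otherwise the walk grows, and once $k\ge\lceil n/3\rceil$ one has $|R\setminus R''|<\lfloor 2n/3\rfloor$, so some $Y$-side edge of $F_{k+1}$ and a disjoint $Y$-side edge of $F_n$ must both land on $R''$, giving a different augmentation. The restriction to $R'$ and the ``close or grow, with length capped at $\lceil n/3\rceil$'' structure are the missing ideas your outline does not contain.
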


\begin{proof}
Assume, by contradiction, that a rainbow matching $R$ of maximal size satisfies $|R|\le n-1$. From Proposition~\ref{prop1} we know that $|R|=n-1$. Without loss of generality we may assume that $R\cap F_n=\emptyset$. Let $X$ and $Y\subset W$ be the sets of vertices of $G$ not covered by $R$. Since $R$ has maximal size, those vertices in $Y$ that are matched by $F_n$ are matched to $U\setminus X$. Let $Z$ be the set of vertices in $U$ that are matched by $F_n$ to $Y$. Let $R'$ be the subset of $R$ that matches the elements in $Z$ (see Figure~\ref{fig2}). We have
\begin{equation}\label{eq1:1}
    |R'|= \lfloor2n/3\rfloor+1
\end{equation}
Define,
\begin{equation*}
    \F'=\{F_i\in\F | F_i\cap R'\ne \emptyset\}.
\end{equation*}

\begin{figure}[h!]
\begin{center}
\includegraphics[scale=0.4]{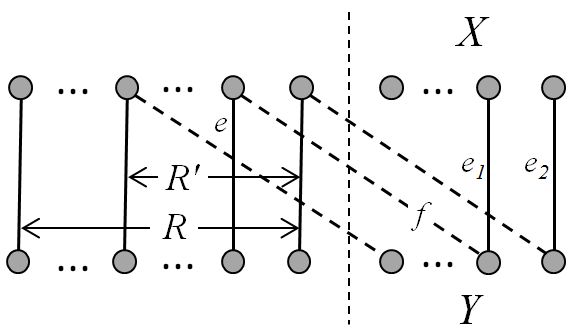}
\end{center}
\caption{The sets $R$ and $R'$. Dashed lines represent the edges of $F_n$ matching vertices of $Y$.}
\label{fig2}
\end{figure}

\begin{claim}\label{claim:1}
Any matching $F_i\in\F'$ has at most one edge between $X$ and $Y$.
\end{claim}

\begin{proof}[Proof of Claim~\ref{claim:1}]
\renewcommand{\qedsymbol}{}
Suppose $F_i\cap R'= \{e\}$ and $F_i$ has two edges $e_1$ and $e_2$ between $X$ and $Y$. Let $f$ be the edge of $F_n$ such that $f\cap e\ne \emptyset$ and $f\cap Y\ne \emptyset$. Without loss of generality we may assume that $f$ does not meet $e_2$ (it may or may not meet $e_1$). Thus, $\left(R\setminus \{e\}\right)\cup\{f,e_2\}$ is a rainbow matching of size $n$ (Figure~\ref{fig2}), contradicting the maximality of $R$.
\end{proof}
As a consequence of Claim~\ref{claim:1} we have:

\begin{claim}\label{claim:2}
Each $F_i\in\F'$ has at least $\lfloor2n/3\rfloor$ edges with one endpoint in $X$ and the other endpoint in $W\setminus Y$ and at least $\lfloor2n/3\rfloor$ edges with one endpoint in $Y$ and the other endpoint in $U\setminus X$.
\end{claim}

Now, let $W'$ be the set of those vertices in $W$ that are endpoints of edges in $R'$.

\begin{claim}\label{claim:3}
Each $F_i\in\F'$ has at least $\lceil n/3\rceil$ edges with one endpoint in $X$ and the other endpoint in $W'$.
\end{claim}

\begin{proof}[Proof of Claim~\ref{claim:3}]
\renewcommand{\qedsymbol}{}
By (\ref{eq1:1}), $|R\setminus R'|= n-1- (\lfloor2n/3\rfloor+1)=\lceil n/3\rceil-2$. By Claim~\ref{claim:2}, the edges with one endpoint in $X$ and the other endpoint in $W\setminus Y$ meet at least $\lfloor2n/3\rfloor - (\lceil n/3\rceil-2)\ge\lceil n/3\rceil$ edges of $R'$.
\end{proof}

Without loss of generality we assume that $F_1\in\F'$. Let $F_1\cap R'=\{r_1\}$ and
let $e_1\in F_1$ be such that $e_1\cap X\ne \emptyset$ and $e_1\cap W'\ne \emptyset$ (such $e_1$ exists by Claim~\ref{claim:3}).
Let $r_2\in R'\setminus\{r_1\}$ be an edge such that $r_2\cap e_1\ne \emptyset$ (See Figure~\ref{fig2a}). We may assume $r_2\in F_2$. By Claim~\ref{claim:3}, $F_2$ has at least $\lceil n/3\rceil-1$ edges with one endpoint in $X\setminus\{e_1\}$ and the other endpoint in $W'$.
Let $e_2\in F_2$ be such an edge, that is, $e_2\cap X\ne \emptyset$, $e_2\cap W'\ne \emptyset$ and $e_2\cap e_1= \emptyset$. Clearly, $e_2\cap r_2= \emptyset$, since they belong to the same matching. If $e_2\cap r_1\ne \emptyset$, then we can augment $R$ in the following way: Let $f\in F_n$ be the edge satisfying $f\cap r_1\cap U\ne \emptyset$ and $f\cap Y\ne \emptyset$ ($f$ exists since $F_1\in\F'$). Then $\left(R\setminus\{r_1,r_2\}\right)\cup\{e_1,e_2,f\}$ is a rainbow matching of size $n$ (see Figure~\ref{fig2a}). If $e_2\cap r_1= \emptyset$, let $r_3\in R'\setminus\{r_1,r_2\}$ be such that $e_2\cap r_3\ne \emptyset$. We may assume $r_3\in F_3$.

\begin{figure}[h!]
\begin{center}
\includegraphics[scale=0.4]{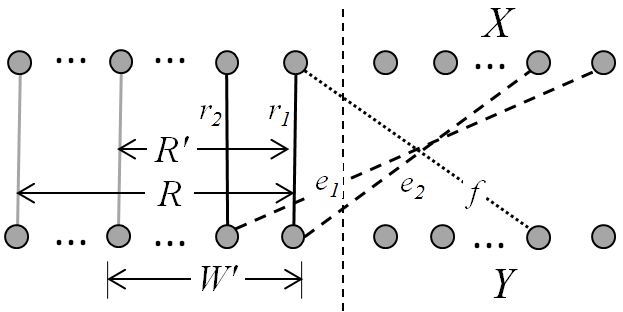}
\end{center}
\caption{Augmenting the rainbow matching $R$. Solid edges are omitted, dashed and dotted edges are added.}
\label{fig2a}
\end{figure}

 We proceed in this manner to obtain a set of disjoint edges $E=\{e_1,e_2,\ldots,e_k\}$, each with endpoints in $X$ and in $W'$ and a set of edges $R''=\{r_1,r_2,\ldots,r_k, r_{k+1}\}\subseteq R'$ such that $e_i\cap r_{i+1}\ne \emptyset$ for $i=1,\ldots,k$ (Figure~\ref{fig3}), and for each $i$, $e_i$ and $r_i$ belong to the same matching. Without loss of generality we assume that $e_i,r_i\in F_i$, for $i=1,\ldots,k$, and $r_{k+1}\in F_{k+1}$. If  $k<\lceil n/3\rceil$, then the matching $F_{k+1}$ still has edges with one endpoint in $X\setminus\{e_1,e_2,\ldots,e_k\}$ and the other endpoint in $W'$, by Claim~\ref{claim:3}. Suppose one of these edges, say $e_{k+1}\in F_{k+1}$, also satisfies $e_{k+1}\cap r_t\ne\emptyset$ for some $t\in\{1,\ldots,k\}$. Then $R$ can be augmented as follows: take $f\in F_n$ such that $f\cap Y\ne \emptyset$ and $f\cap r_l\ne \emptyset$ for some $l\in\{t,\ldots,k+1\}$. Then $\left(R\setminus\{r_t,\ldots,r_{k+1}\}\right)\cup \{e_t,\dots,e_{k+1},f\}$ is a rainbow matching of size $n$ (see Figure~\ref{fig4}). Otherwise, some $e_{k+1}\in F_{k+1}$ has one endpoint in $X\setminus\{e_1,e_2,\ldots,e_k\}$ and one endpoint in $W'\setminus \{r_1,r_2,\ldots,r_k, r_{k+1}\}$, so the path can be extended. Thus, we may assume that $k\ge\lceil n/3\rceil$.

 \begin{figure}[h!]
\begin{center}
\includegraphics[scale=0.4]{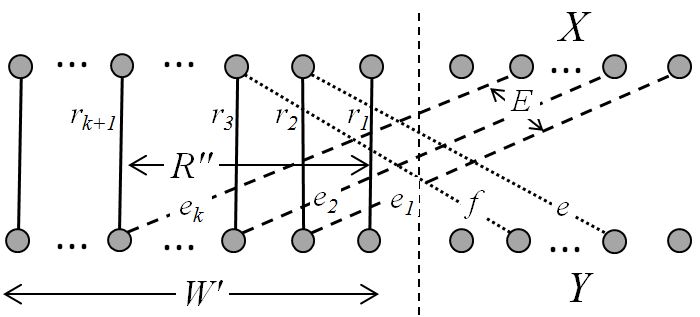}
\end{center}
\caption{Construction of the sets $E$ and $R''$ and augmenting the rainbow matching $R$ in Case 2. Solid dark edges are omitted, dashed and dotted edges are added.}
\label{fig3}
\end{figure}

\begin{figure}[h!]
\begin{center}
\includegraphics[scale=0.4]{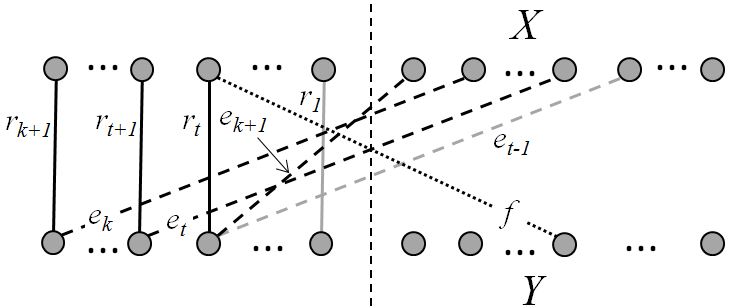}
\end{center}
\caption{Augmenting the rainbow matching $R$ in Case 1. Solid dark edges are omitted. Dashed and dotted dark edges are added.}
\label{fig4}
\end{figure}

 Now, the set of edges $\left(R\setminus R''\right)\cup E$ forms a partial rainbow matching of size $n-2$. It excludes the matchings $F_{k+1}$ and $F_n$. Since we assume that $k\ge\lceil n/3\rceil$, we have
 \begin{equation}\label{eq2:2}
    |R''|>\lceil n/3\rceil
 \end{equation}
 and thus, $|R\setminus R''|< n-1- \lceil n/3\rceil=\lfloor 2n/3\rfloor-1<\lfloor 2n/3\rfloor$. By Claim~\ref{claim:1}, there are at least $\lfloor2n/3\rfloor$ edges of $F_{k+1}$ with endpoints in $Y$ and $U\setminus X$. Thus, there exists an edge $e\in F_{k+1}$ such that $e\cap Y\ne\emptyset$ and $e\cap r_i\ne\emptyset$ for some $r_i\in R''$. Let $f\in F_n$ be such that $f\cap Y\ne\emptyset$, $e\cap f=\emptyset$, and $f\cap r_j\ne\emptyset$ for some $r_j\in R''\setminus\{r_i\}$. Since Theorem~\ref{thm1} can be easily verified for $n\le3$, we assume that $n>3$. Then, by (\ref{eq2:2}), $|R''|>2$. Since all the vertices in $R''$ are matched by $F_n$ to vertices in $Y$, such $f$ exists. Hence, $\left(R\setminus R''\right)\cup E\cup\{e,f\}$ is a rainbow matching of size $n$ (Figure~\ref{fig3}). This completes the proof.

\end{proof}

\providecommand{\href}[2]{#2}


\begin{thebibliography}{10}

\bibitem{AhBer}
R.~Aharoni and E.~Berger, \emph{Rainbow matchings in $r$-partite $r$-graphs},
  Electron. J. Combin \textbf{16} (2009), no.~1, R119.

\bibitem{ACH}
R.~Aharoni, P.~Charbit, and D.~Howard, \emph{On a generalization of the
  {R}yser-{B}rualdi-{S}tein conjecture}, manuscript.

\bibitem{AKZ}
R.~Aharoni, D.~Kotlar, and R.~Ziv, \emph{Rainbow sets in the intersection of two matroids}, manuscript.

\bibitem{brouwer78}
A.E. Brouwer, A.J. de~Vries, and R.M.A. Wieringa, \emph{A lower bound for the
  length of partial transversals in a latin square}, Nieuw Arch. Wisk.
  \textbf{24} (1978), no.~3, 330--332.

\bibitem{BruRys}
R.A. Brualdi and H.J. Ryser, \emph{Combinatorial matrix theory}, Cambridge
  University Press, 1991.

\bibitem{Denes74}
J.~D\'{e}nes and A.~D. Keedwell, \emph{Latin squares and their applications},
  Academic Press, New York, 1974.

\bibitem{Drisko98}
A.~A. Drisko, \emph{Transversals in row-{L}atin rectangles}, J. Combin. Theory
  Ser. A \textbf{84} (1998), 181--195.

\bibitem{gyarfas2012}
A.~Gy{\'a}rf{\'a}s and G.~N. S{\'a}rk{\"o}zy, \emph{Rainbow matchings and
  partial transversals of latin squares}, Arxiv preprint arXiv:1208.5670
  (2012).

\bibitem{HatShor08}
P.~Hatami and P.~W. Shor, \emph{A lower bound for the length of a partial
  transversal in a {L}atin square}, J. Combin. Theory A \textbf{115} (2008),
  1103--1113.

\bibitem{kostochka2012}
A.~Kostochka and M.~Yancey, \emph{Large rainbow matchings in edge-colored
  graphs}, Combinatorics, Probability and Computing \textbf{21} (2012),
  no.~1-2, 255--263.

\bibitem{LeSau10}
T.D. LeSaulnier, C.~Stocker, P.S. Wenger, and D.B. West, \emph{Rainbow matching
  in edge-colored graphs}, Electron. J. Combin \textbf{17} (2010), N26.

\bibitem{Ryser67}
H.J. Ryser, \emph{Neuere probleme der kombinatorik}, Vortr\"{a}ge \"{u}ber
  Kombinatorik, Oberwolfach, Matematisches Forschungsinstitute (Oberwolfach,
  Germany), July 1967, pp.~69--91.

\bibitem{Stein75}
S.K. Stein, \emph{Transversals of {L}atin squares and their generalizations},
  Pacific Journal of Mathematics \textbf{59} (1975), no.~2, 567--575.

\bibitem{Wool78}
D.E. Woolbright, \emph{An $n\times n$ {L}atin square has a transversal with at
  least $n-\sqrt{n}$ distinct elements}, J. Combin. Theory A \textbf{24}
  (1978), 235--237.

\end{thebibliography}
\end{document}